\begin{document}

\newtheorem{theorem}{Theorem}
\newtheorem{lemma}[theorem]{Lemma}
\newtheorem{claim}[theorem]{Claim}
\newtheorem{cor}[theorem]{Corollary}
\newtheorem{proposition}[theorem]{Proposition}
\newtheorem{definition}{Definition}
\newtheorem{question}[theorem]{Question}
\newtheorem{remark}[theorem]{Remark}
\newcommand{\hh}{{{\mathrm h}}}

\numberwithin{equation}{section}
\numberwithin{theorem}{section}
\numberwithin{table}{section}

\def\sssum{\mathop{\sum\!\sum\!\sum}}
\def\ssum{\mathop{\sum\ldots \sum}}
\def\dsum{\mathop{\sum \sum}}
\def\iint{\mathop{\int\ldots \int}}

\def\squareforqed{\hbox{\rlap{$\sqcap$}$\sqcup$}}
\def\qed{\ifmmode\squareforqed\else{\unskip\nobreak\hfil
\penalty50\hskip1em\null\nobreak\hfil\squareforqed
\parfillskip=0pt\finalhyphendemerits=0\endgraf}\fi}

\newfont{\teneufm}{eufm10}
\newfont{\seveneufm}{eufm7}
\newfont{\fiveeufm}{eufm5}
%
%
\newfam\eufmfam
     \textfont\eufmfam=\teneufm
\scriptfont\eufmfam=\seveneufm
     \scriptscriptfont\eufmfam=\fiveeufm
%
%
\def\frak#1{{\fam\eufmfam\relax#1}}

\newcommand{\bflambda}{{\boldsymbol{\lambda}}}
\newcommand{\bfmu}{{\boldsymbol{\mu}}}
\newcommand{\bfxi}{{\boldsymbol{\xi}}}
\newcommand{\bfrho}{{\boldsymbol{\rho}}}

\def\fK{\mathfrak K}
\def\fT{\mathfrak{T}}

\def\fA{{\mathfrak A}}
\def\fB{{\mathfrak B}}
\def\fC{{\mathfrak C}}

\def\E{\mathsf {E}}

\def \balpha{\bm{\alpha}}
\def \bbeta{\bm{\beta}}
\def \bgamma{\bm{\gamma}}
\def \blambda{\bm{\lambda}}
\def \bchi{\bm{\chi}}
\def \bphi{\bm{\varphi}}
\def \bpsi{\bm{\psi}}

\def\eqref#1{(\ref{#1})}

\def\vec#1{\mathbf{#1}}


\def\cA{{\mathcal A}}
\def\cB{{\mathcal B}}
\def\cC{{\mathcal C}}
\def\cD{{\mathcal D}}
\def\cE{{\mathcal E}}
\def\cF{{\mathcal F}}
\def\cG{{\mathcal G}}
\def\cH{{\mathcal H}}
\def\cI{{\mathcal I}}
\def\cJ{{\mathcal J}}
\def\cK{{\mathcal K}}
\def\cL{{\mathcal L}}
\def\cM{{\mathcal M}}
\def\cN{{\mathcal N}}
\def\cO{{\mathcal O}}
\def\cP{{\mathcal P}}
\def\cQ{{\mathcal Q}}
\def\cR{{\mathcal R}}
\def\cS{{\mathcal S}}
\def\cT{{\mathcal T}}
\def\cU{{\mathcal U}}
\def\cV{{\mathcal V}}
\def\cW{{\mathcal W}}
\def\cX{{\mathcal X}}
\def\cY{{\mathcal Y}}
\def\cZ{{\mathcal Z}}
\newcommand{\rmod}[1]{\: \mbox{mod} \: #1}

\def\cg{{\mathcal g}}

\def\e{{\mathbf{\,e}}}
\def\ep{{\mathbf{\,e}}_p}
\def\eq{{\mathbf{\,e}}_q}

\def\em{{\mathbf{\,e}}_m}

\def\Tr{{\mathrm{Tr}}}
\def\Nm{{\mathrm{Nm}}}

\def\rE{{\mathrm{E}}}
\def\rT{{\mathrm{T}}}

 \def\SS{{\mathbf{S}}}

\def\lcm{{\mathrm{lcm}}}

\def\t{\tilde}
\def\ov{\overline}

\def\({\left(}
\def\){\right)}
\def\l|{\left|}
\def\r|{\right|}
\def\fl#1{\left\lfloor#1\right\rfloor}
\def\rf#1{\left\lceil#1\right\rceil}
\def\flq#1{\langle #1 \rangle_q}

\def\mand{\qquad \mbox{and} \qquad}

\newcommand{\commIg}[1]{\marginpar{%
\begin{color}{magenta}
\vskip-\baselineskip 
\raggedright\footnotesize
\itshape\hrule \smallskip Ig: #1\par\smallskip\hrule\end{color}}}

\newcommand{\commSi}[1]{\marginpar{%
\begin{color}{blue}
\vskip-\baselineskip 
\raggedright\footnotesize
\itshape\hrule \smallskip Si: #1\par\smallskip\hrule\end{color}}}




\hyphenation{re-pub-lished}

\mathsurround=1pt

\def\bfdefault{b}
\overfullrule=5pt

\def \F{{\mathbb F}}
\def \K{{\mathbb K}}
\def \Z{{\mathbb Z}}
\def \Q{{\mathbb Q}}
\def \R{{\mathbb R}}
\def \C{{\\mathbb C}}
\def\Fp{\F_p}
\def \fp{\Fp^*}

\def\Smn{S_{k,\ell,q}(m,n)}

\def\Kmn{\cK_p(m,n)}
\def\psmn{\psi_p(m,n)}

\def\SM{\cS_{k,\ell,q}(\cM)}
\def\SMN{\cS_{k,\ell,q}(\cM,\cN)}
\def\SAMN{\cS_{k,\ell,q}(\cA;\cM,\cN)}
\def\SABMN{\cS_{k,\ell,q}(\cA,\cB;\cM,\cN)}

\def\SIJq{\cS_{k,\ell,q}(\cI,\cJ)}
\def\SAJq{\cS_{k,\ell,q}(\cA;\cJ)}
\def\SABJq{\cS_{k,\ell,q}(\cA, \cB;\cJ)}

\def\sM{\cS_{k,q}^*(\cM)}
\def\sMN{\cS_{k,q}^*(\cM,\cN)}
\def\sAMN{\cS_{k,q}^*(\cA;\cM,\cN)}
\def\sABMN{\cS_{k,q}^*(\cA,\cB;\cM,\cN)}

\def\sIJq{\cS_{k,q}^*(\cI,\cJ)}
\def\sAJq{\cS_{k,q}^*(\cA;\cJ)}
\def\sABJq{\cS_{k,q}^*(\cA, \cB;\cJ)}
\def\sABJp{\cS_{k,p}^*(\cA, \cB;\cJ)}

 \def \xbar{\overline x}

 \author[S.  Macourt] {Simon Macourt}
\address{Department of Pure Mathematics, University of New South Wales,
Sydney, NSW 2052, Australia}
\email{s.macourt@student.unsw.edu.au}

\begin{abstract}
We improve an existing result on exponential quadrilinear sums in the case of sums over multiplicative subgroups of a finite field and use it to give a new bound on exponential sums with quadrinomials.
\end{abstract}
\keywords{exponential sum, sparse polynomial, quadrinomial}
\subjclass[2010]{11L07, 11T23}

\title{Bounds On Exponential Sums With Quadrinomials}

\maketitle

\section{Introduction}
\subsection{Set Up}
For a prime $p$, we use $\F_p$ to denote  the finite field of $p$ elements.

For a $t$-sparse polynomial  
$$
\Psi(X) = \sum_{i=1}^t a_i X^{k_i}
$$
with some   pairwise distinct non-zero integer exponents $k_1, \ldots, k_t$ and 
coefficients  $a_1, \ldots, a_t\in \F_p^*$,  and a multiplicative character $\chi$ of $\F_p^*$ we define  the   sums
$$
S_\chi(\Psi) = \sum_{x\in \F_p^*} \chi(x) \ep(\Psi(x)), 
$$
where $\ep(u) = \exp(2 \pi i u/p)$ and $\chi$ is an arbitrary 
multiplicative character of $\F_p^*$. The challenge for such sums is to provide a bound that is stronger than the Weil bound
\begin{align*}
S_\chi(\Psi)\le \max\{k_1,\dots,k_t\}p^{1/2},
\end{align*}
see \cite[Appendix 5, Example 12]{Weil}, by taking advantage of the arithmetic structure of the exponents. The case of exponential sums of monomials has seen much study with Shparlinski \cite{Shp1} providing the first such bound. Further improvements have been made by various other authors, see \cite{BGK, Bourg1, HBK, Kon, Shkr1, Sht}.
We also mention that Cochrane, Coffelt and Pinner, as well as others, have given several bounds on exponential sums with sparse polynomials, see \cite{CoCoPi1,CoCoPi2,CoPi1, CoPi2, CoPi3, CoPi4} and references therein, some of which we outline in Section \ref{prev}.

Here we provide some new bounds on quadrinomial exponential sums using the techniques in \cite{MaShkShp}. We thus define
\begin{equation} \label{eq:Quad}
\Psi(X)=aX^k+bX^\ell+cX^m+dX^n.
\end{equation} 
We mention that all our results extend naturally to more general sums with polynomials of the shape 
\begin{equation*} 
\Psi(X)=aX^k+f(X^\ell)+g(X^m)+h(X^n)
\end{equation*} 
for polynomials $f,g,h \in \F_p[X]$.

The notation $A \ll B$ is equivalent to $|A|\le c|B|$ for some constant $c$.
\subsection{Previous Results} \label{prev}
We compare our result for quadrinomials \eqref{eq:Quad} to those of Cochrane, Coffelt and Pinner \cite[Theorem 1.1]{CoCoPi1}
\begin{equation*}
S_\chi (\Psi)\ll \left(\frac{k \ell m n}{\max(k,\ell, m,n)}\right)^{1/9}p^{8/9}
\end{equation*}
which is non-trivial for 
$$\frac{k\ell mn}{\max(k,\ell , m, n)} < p,$$
and of Cochrane and Pinner \cite[Theorem 1.1]{CoPi1}
\begin{equation*}
S_\chi (\Psi)\ll (k\ell mn)^{1/16}p^{7/8}
\end{equation*}
which is non-trivial for $k \ell mn<p^2$.
Our new result in Theorem \ref{thm:Bound3} is independent of the size of the exponents but instead depends on various greatest common divisors.

\subsection{Main Result}
Our main result is the following theorem.
\begin{theorem}
\label{thm:Bound3}   
Let $\Psi(X)$ be a quadrinomial of the form~\eqref{eq:Quad} 
with $a,b,c,d  \in \F_p^*$.  
Define
$$\alpha= \gcd(k,p-1), \ \beta = \gcd(\ell ,p-1), \  \gamma =  \gcd(m,p-1), \ \delta=\gcd(n,p-1)
$$
and
$$
f =\frac{\alpha}{\gcd(\alpha,\delta)},\qquad g =\frac{\beta}{\gcd(\beta,\delta)}, \qquad h=\frac{\gamma }{\gcd(\gamma,\delta)}.
$$
Suppose $f\ge g\ge h$, then $p/\delta\ge f$ and

\begin{align*}
S_\chi(\Psi) \ll &pg^{-1/8} \\
&+\left\{
\begin{array}{ll}
p^{15/16}\delta^{1/32},& \text{if $ g\ge p^{1/2}\log p$},\\
p^{31/32}\delta^{1/32}g^{-1/16+o(1)},& \text{if $ f \ge p^{1/2}\log p>g$}, \\
p\delta^{1/32}(fg)^{-1/16+o(1)},& \text{if $p/\delta  \ge p^{1/2}\log p>f$}, \\
p^{31/32+o(1)}\delta^{3/32}(fg)^{-1/16},& \text{if $ p/\delta < p^{1/2}\log p$}.
\end{array}
\right.
\end{align*}
\end{theorem}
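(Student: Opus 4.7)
The plan is to mirror the approach of \cite{MaShkShp} from the trilinear (trinomial) case, with the new principal input being a bound on quadrilinear exponential sums over multiplicative subgroups of $\F_p^*$.

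\textbf{Substitution and averaging.} Let $\cH = \{y \in \F_p^* : y^n = 1\}$, which is a subgroup of order $\delta = \gcd(n,p-1)$. For each $y \in \cH$ the substitution $x \mapsto xy$ fixes the term $dX^n$ while rescaling $a, b, c$ by $y^k, y^\ell, y^m$; combined with $\chi(xy) = \chi(x)\chi(y)$ this yields
\[
|S_\chi(\Psi)| = |T(y)|, \quad T(y) = \sum_{x\in\F_p^*} \chi(x)\ep\!\left(ay^kx^k + by^\ell x^\ell + cy^m x^m + dx^n\right).
\]
Hence for any positive integer $r$,
\[
|S_\chi(\Psi)|^{2r} \le \frac{1}{|\cH|}\sum_{y\in \cH} |T(y)|^{2r}.
\]

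\textbf{Reduction to a quadrilinear sum.} I would expand this $2r$-th moment (the exponent $1/32$ in the conclusion suggests $r=4$), apply a standard substitution $x_i = x_0 u_i$ to factor out a common multiplier, and interchange the order of summation to bring the $y$-sum inside. As $y$ ranges over $\cH$ the image $\{y^j : y \in \cH\}$ is the subgroup of $\cH$ of order $\delta/\gcd(\delta,j)$, and via the identity $\gcd(\delta,j) = \gcd(j,n,p-1)$, valid because $\delta \mid p-1$, these orders translate exactly into the indices $f, g, h$ of the theorem, viewed as $[U_n : U_k \cap U_n]$ etc., where $U_j = \{x^j : x \in \F_p^*\}$. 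After appropriate regrouping, the remaining expression reduces to a quadrilinear exponential sum over four multiplicative subgroups of $\F_p^*$ whose sizes are controlled by $f$, $g$, $\delta$, and $p/\delta$.

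\textbf{Applying the quadrilinear bound and case analysis.} I would then invoke the improved bound on quadrilinear exponential sums over multiplicative subgroups from \cite{MaShkShp}, which gives a power-saving whenever the subgroups are sufficiently large. The four cases in the conclusion match the standard dichotomy for sums over subgroups: according to whether each of $g$, $f$, and $p/\delta$ exceeds the Burgess/Karatsuba threshold $p^{1/2}\log p$, one replaces the quadrilinear estimate for the offending factor by a weaker Weil-type bound. The summand $pg^{-1/8}$ is the residual term coming from an earlier single-variable reduction and dominates when $g$ is small.

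\textbf{Main obstacle.} The main difficulty is bookkeeping: identifying the subgroup sizes produced at each stage, matching them against the gcd quantities $f, g, h$ via identities like $\gcd(\alpha,\delta) = \gcd(k,n,p-1)$, and verifying the hypotheses of the quadrilinear bound. One must also choose $r$ correctly, balance the four cases so that the exponents $1/8$, $1/16$, $1/32$ and the weight on $\delta$ emerge from the right optimization, and carefully confirm the monotonicity assumption $f \ge g \ge h$ is used at the appropriate step to ensure $g$ and $f$ play the asymmetric roles they do in the bound.
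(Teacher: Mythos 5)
Your reduction goes in the wrong direction, and this is a genuine gap rather than a bookkeeping issue. You average over the subgroup $\cH$ of order $\delta=\gcd(n,p-1)$ that stabilizes the term $dX^n$, and then take a $2r$-th moment. Two things go wrong. First, the subgroup orders this produces are not the quantities in the theorem: the image $\{y^j : y\in\cH\}$ has order $\delta/\gcd(\delta,j)$, whereas $f=\alpha/\gcd(\alpha,\delta)$ has $\alpha=\gcd(k,p-1)$, not $\delta$, in the numerator (for instance $p=13$, $k=4$, $n=6$ gives $f=2$ but $\delta/\gcd(\delta,k)=3$), so your claimed identification with $f,g,h$ fails. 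Second, and more fundamentally, the theorem is strongest when $\delta$ is \emph{small}; when $\delta=1$ your group $\cH$ is trivial, the averaging and the moment expansion extract no structure at all, yet the theorem still asserts a power saving governed by $f=\alpha$, $g=\beta$, $h=\gamma$. No amount of regrouping after your first step can conjure subgroups of sizes $f$, $g$, $h$ out of a trivial average.

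The correct decomposition is dual to yours: one averages over the stabilizers of the \emph{other three} terms. Writing the summation variable as $wxyz$ with $w\in\F_p^*$ and $x,y,z$ ranging over the subgroups $\cG_\alpha,\cG_\beta,\cG_\gamma$ of orders $\alpha,\beta,\gamma$ (so that $x^k=y^\ell=z^m=1$), each of the first three monomials loses its dependence on one of the four variables and is absorbed into bounded weights $\vartheta_{w,x,y}$, $\rho_{w,x,z}$, $\sigma_{w,y,z}$, leaving $\ep(dw^nx^ny^nz^n)$ as the genuine quadrilinear phase; no moment inequality is needed at this stage. Passing to $n$-th powers then produces sets of sizes $(p-1)/\delta$, $f$, $g$, $h$ with multiplicities $\delta$, $\gcd(\alpha,\delta)$, $\gcd(\beta,\delta)$, $\gcd(\gamma,\delta)$, to which the quadrilinear bound of Lemma~\ref{lem:Bound T4} is applied with $W=p/\delta\ge X=f\ge Y=g\ge Z=h$ (the inequality $p/\delta\ge f$ coming from $f\delta=\lcm(\alpha,\delta)<p$). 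Note also that the quadrilinear estimate is not available off the shelf in \cite{MaShkShp}, which treats trilinear sums; it must first be established by inserting the subgroup energy bounds of \cite{MaShkShp} into the quadrilinear machinery of \cite{PetShp}, and that is where the exponents $7/8$, $31/32$, $1/32$ actually arise. In particular the term $pg^{-1/8}$ is the diagonal contribution $WXZY^{7/8}$ of that lemma, not a residue of a single-variable reduction.
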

We mention that our result is independent of the size of our powers $k,l,m,n$ and is strongest when $\delta$ is small and $f,g,h$ are large. As mentioned in the previous section, previous results become trivial for quadrinomials of large degree. It is easy to see that our bound is non-trivial and improves previous results for a wide range of exponents $k, \ell, m$ and $n$.
\section{Preliminaries}
We recall the following classical  bound of  bilinear sums, 
see, for example,~\cite[Equation~1.4]{BouGar} or~\cite[Lemma~4.1]{Gar}.

\begin{lemma}
\label{lem:bilin} 
For any sets $\cX, \cY \subseteq \F_p$ and any  $\alpha= (\alpha_{x})_{x\in \cX}$, $\beta = \( \beta_{y}\)_{y \in \cY}$, 
with 
$$
\sum_{x\in \cX}|\alpha_{x}|^2 = A \mand  \sum_{y \in \cY}|\beta_{y}|^2 = B, 
$$
we have 
$$
\left |\sum_{x \in \cX}\sum_{y \in \cY} \alpha_{x} \beta_{y}  \ep(xy) \right| \le \sqrt{pAB}.
$$
\end{lemma}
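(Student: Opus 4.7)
The plan is to prove this by a standard Cauchy--Schwarz argument followed by an application of the orthogonality of additive characters on $\F_p$. Denoting the double sum by $T$, I would first pull the inner sum over $y$ together with $\beta_y$ into a single function of $x$, writing
$$
T = \sum_{x\in\cX} \alpha_x \, \widehat{\beta}(x), \qquad \text{where} \qquad \widehat{\beta}(x) = \sum_{y\in\cY} \beta_y \, \ep(xy).
$$

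Next I would apply Cauchy--Schwarz in the variable $x$:
$$
|T|^2 \le \left( \sum_{x\in\cX} |\alpha_x|^2 \right) \left( \sum_{x\in\cX} |\widehat{\beta}(x)|^2 \right) \le A \sum_{x\in\F_p} |\widehat{\beta}(x)|^2,
$$
where the last inequality uses that all summands are non-negative, so extending from $\cX$ to all of $\F_p$ only enlarges the sum. This extension is the key move because it allows me to invoke orthogonality over the full group $\F_p$.

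The remaining step is to expand the square and use the orthogonality relation
$$
\sum_{x\in\F_p} \ep\bigl(x(y_1 - y_2)\bigr) = \begin{cases} p, & y_1 = y_2, \\ 0, & y_1 \ne y_2, \end{cases}
$$
which yields
$$
\sum_{x\in\F_p} |\widehat{\beta}(x)|^2 = \sum_{y_1,y_2 \in \cY} \beta_{y_1} \overline{\beta_{y_2}} \sum_{x\in\F_p} \ep\bigl(x(y_1-y_2)\bigr) = p \sum_{y\in\cY} |\beta_y|^2 = pB.
$$
Combining gives $|T|^2 \le pAB$, i.e.\ $|T| \le \sqrt{pAB}$, as required.

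There is no serious obstacle here: the argument is essentially Parseval's identity for the Fourier transform on $\F_p$ combined with Cauchy--Schwarz. The only point that requires minor care is the legitimacy of extending the outer sum from $\cX$ to $\F_p$, which is immediate from positivity, and the verification that the orthogonality relation is applied with the correct normalisation (giving $p$ rather than $1$). The whole proof is two lines once set up.
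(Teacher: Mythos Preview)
Your proof is correct and is precisely the standard Cauchy--Schwarz plus orthogonality argument for this classical bilinear bound. Note that the paper does not actually prove this lemma: it merely recalls it with references to~\cite[Equation~1.4]{BouGar} and~\cite[Lemma~4.1]{Gar}, so there is no ``paper's own proof'' to compare against --- your argument is exactly what one finds in those sources.
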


We define $D_\times(\cU)$ to be the number of solutions of 
$$
(u_1-v_1)(u_2-v_2) = (u_3-v_3)(u_4-v_4), \qquad u_i,v_i \in \cU,\  i=1,2,3,4.
$$
We also define the multiplicative energy $\rE^\times (\cU, \cV)$ to be the number of solutions of
$$
u_1v_1=u_2v_2  \qquad u_i \in \cU,\ v_i \in \cV, \  i=1,2.
$$
When $\cU=\cV$, we write $\rE^\times (\cU, \cU)=E^\times (\cU)$.

We need the following result from \cite[Corollary 3.3]{MaShkShp}.
\begin{lemma}
\label{Bound Dx2}
For a multiplicative subgroup $\cG \subset \F_p^*$, we have
$$
D_\times(\cG) \ll   \left\{
\begin{array}{ll}
|\cG|^8 p^{-1} , & \text{if $|\cG| \ge p^{1/2}\log p$},\\
|\cG|^6 \log |\cG|, & \text{if $|\cG|< p^{1/2}\log p$}. 
\end{array}
\right.
$$
\end{lemma}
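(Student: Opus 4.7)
The plan is to exploit the multiplicative invariance of $\cG$ to factor out ``scale'' variables, reducing $D_\times(\cG)$ to the multiplicative energy of the shifted subgroup $1-\cG$, and then invoke known bounds on that energy.

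\emph{Parametric reduction.} Substituting $u_i = g_i$ and $v_i = g_i h_i$ with $g_i, h_i \in \cG$, we obtain $u_i - v_i = g_i(1-h_i)$, and the defining equation of $D_\times(\cG)$ becomes
\begin{equation*}
g_1 g_2 (1-h_1)(1-h_2) = g_3 g_4 (1-h_3)(1-h_4).
\end{equation*}
Tuples with some $h_i = 1$ force another $h_j = 1$ on the opposite side, so by inclusion--exclusion they contribute $O(|\cG|^6)$, which is absorbed in both target bounds. For fixed $(h_1, \ldots, h_4) \in (\cG\setminus\{1\})^4$, the number of $(g_1, \ldots, g_4) \in \cG^4$ satisfying the equation equals $|\cG|^3$ if $(1-h_3)(1-h_4)/((1-h_1)(1-h_2)) \in \cG$ and $0$ otherwise. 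Hence $D_\times(\cG) \ll |\cG|^3 R + |\cG|^6$, where $R$ counts those 4-tuples whose associated ratio lies in $\cG$.

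\emph{Cauchy--Schwarz to the shifted energy.} Let $m(c) = \#\{(h_1,h_2) \in (\cG\setminus\{1\})^2 : (1-h_1)(1-h_2) = c\}$ and $M(c) = \sum_{g \in \cG} m(c/g)$, so that $R = \sum_c m(c) M(c)$. A direct expansion yields $\sum_c m(c)^2 = \rE^\times(1-\cG)$ (the multiplicative energy of the shifted subgroup) and $\sum_c M(c)^2 = |\cG| R$, using the $\cG$-invariance of the inner equation. Cauchy--Schwarz then gives $R^2 \le |\cG|\,\rE^\times(1-\cG)\,R$, hence $R \ll |\cG|\,\rE^\times(1-\cG)$ and
\begin{equation*}
D_\times(\cG) \ll |\cG|^4\,\rE^\times(1-\cG) + |\cG|^6.
\end{equation*}

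\emph{Bounding the shifted energy.} It remains to establish
\begin{equation*}
\rE^\times(1-\cG) \ll \begin{cases} |\cG|^4/p, & |\cG| \ge p^{1/2}\log p, \\ |\cG|^2 \log|\cG|, & |\cG| < p^{1/2}\log p. \end{cases}
\end{equation*}
For the small regime, the Stepanov method as developed in \cite{HBK} bounds the number of solutions of $(1-g_1)(1-g_2) = (1-g_1')(1-g_2')$ with $g_i \in \cG$ by $O(|\cG|^2 \log|\cG|)$. For the large regime, one writes the energy as a fourth moment, expands the equation $a_1 a_2 = a_1' a_2'$ via additive characters, and estimates the resulting bilinear sums $\sum_{a,b \in 1-\cG}\ep(tab)$ using Lemma \ref{lem:bilin} together with further structural input from the subgroup. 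The main obstacle lies precisely here: a direct application of Lemma \ref{lem:bilin} to indicator weights yields only $|\cG|^2 p$, so reaching $|\cG|^4/p$ requires a more delicate bilinear decomposition exploiting the multiplicative structure of $\cG$, which is the technical heart of the argument.
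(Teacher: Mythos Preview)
The paper does not prove this lemma at all; it simply quotes \cite[Corollary~3.3]{MaShkShp}. Your reduction is a genuine argument and it is correct: the bijective substitution $v_i = u_i h_i$, the count of $|\cG|^3$ solutions in the $g$-variables whenever the ratio of the $(1-h_i)$ products lies in $\cG$, and the Cauchy--Schwarz step yielding $R \le |\cG|\,\rE^\times(1-\cG)$ are all valid, so you obtain
\[
D_\times(\cG) \ll |\cG|^4\,\rE^\times(1-\cG) + |\cG|^6.
\]
This delivers both cases of the lemma as soon as one has $\rE^\times(1-\cG) \ll |\cG|^4/p$ for $|\cG| \ge p^{1/2}\log p$ and $\rE^\times(1-\cG) \ll |\cG|^2\log|\cG|$ otherwise.

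The only gap is your last paragraph, where you concede that the large-regime energy bound is ``the technical heart of the argument'' and then leave it unproved. But you do not need to prove it: since $\rE^\times(1-\cG) = \rE^\times(\cG-1)$ (multiplicative energy is invariant under scaling by $-1$), the required bound is exactly Corollary~\ref{cor:MultEnergy} of the present paper with $\lambda = -1$, itself quoted from \cite[Corollary~4.1]{MaShkShp}. Replace your final paragraph with that citation and the proof is complete. In effect you have shown that Lemma~\ref{Bound Dx2} is a formal consequence of Lemma~\ref{lem:EnergyShiftSubgr}, so the paper could have derived the former from the latter rather than importing both separately.
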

We also use \cite[Corollary 4.1]{MaShkShp}.
\begin{lemma}
\label{lem:EnergyShiftSubgr}
Let $\cG$ be a multiplicative subgroup of $\F_p^*$. 
Then for any   $\lambda \in \F_p^* $, we have 
$$
\rE^\times (\cG + \lambda) -\frac{|\cG|^4}{p} \ll  \left\{
\begin{array}{ll}
p^{1/2} |\cG|^{3/2},& \text{if $ |\cG| \ge p^{2/3}$},\\
|\cG|^3 p^{-1/2} , & \text{if $p^{2/3} > |\cG| \ge p^{1/2}\log p$},\\
|\cG|^2 \log |\cG|, & \text{if $|\cG|< p^{1/2}\log p$}. 
\end{array}
\right.
$$
\end{lemma}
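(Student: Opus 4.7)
I would follow the strategy of \cite{MaShkShp}, adapting their treatment of exponential sums over multiplicative subgroups to the four-term case. The starting point is the substitution $x = yz$ with $z$ ranging over the subgroup $\cU_\delta = \{u \in \F_p^*: u^\delta = 1\}$ of order $\delta$. Since $\delta \mid n$, we have $z^n = 1$, so the term $dx^n = dy^n$ is unaffected, while each value of $x \in \F_p^*$ is produced exactly $\delta$ times. Hence
$$\delta\, S_\chi(\Psi) \;=\; \sum_{y\in \F_p^*} \chi(y)\,\ep(dy^n)\, W(y), \qquad W(y) \;=\; \sum_{z \in \cU_\delta} \chi(z) \ep\bigl(ay^k z^k + by^\ell z^\ell + c y^m z^m\bigr),$$
and a first Cauchy--Schwarz in $y$ yields $\delta^2|S_\chi(\Psi)|^2 \ll p \sum_y |W(y)|^2$. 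Expanding $|W(y)|^2$ opens the differences $z_1^k - z_2^k$, $z_1^\ell - z_2^\ell$, $z_1^m - z_2^m$ with $z_1, z_2 \in \cU_\delta$, which live in cosets of the subgroups of $k$-th, $\ell$-th and $m$-th powers. Via the identities $\alpha/\gcd(\alpha,\delta) = f$, $\beta/\gcd(\beta,\delta)=g$, $\gamma/\gcd(\gamma,\delta)=h$, the relevant subgroup indices are controlled by $f$, $g$, $h$.

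The next stage is to apply Lemma~\ref{lem:bilin}, pairing two of the three difference variables against the third (with the free $y$ variable as dual frequency), and then to iterate Cauchy--Schwarz once or twice more to reduce the row and column weights to the multiplicative quantities $D_\times$ and $\rE^\times$ of the subgroups involved. Concretely, the $\ell$-th and $m$-th power differences contribute a $D_\times$ of a subgroup of size $\asymp p/g$, bounded by Lemma~\ref{Bound Dx2}, while the combination with the factor $\ep(dy^n)$ naturally produces a translate of the $n$-th power subgroup (of size $\sim p/\delta$), whose multiplicative energy is controlled by Lemma~\ref{lem:EnergyShiftSubgr}. The additive term $pg^{-1/8}$ in the final bound is the common contribution of this shifted energy estimate and appears uniformly across all four regimes.

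The four cases in the statement correspond to the piecewise behaviour of Lemmas~\ref{Bound Dx2} and~\ref{lem:EnergyShiftSubgr}, according to whether each of $g$, $f$, and $p/\delta$ exceeds the threshold $p^{1/2}\log p$. The main obstacle will be the combinatorial bookkeeping: keeping track of which subgroup sizes (involving $\gcd(\alpha,\delta),\gcd(\beta,\delta),\gcd(\gamma,\delta)$) appear in each energy factor, arranging the Cauchy--Schwarz applications so that the final exponents $15/16$, $31/32$, $1/32$, $3/32$ match up, and verifying that the $h$-dependence is always absorbed by the $g$-dependence through $h \le g$. The asymmetry between $f$ and $g$ in the exponents will come precisely from the choice of which difference variable plays the dual role in the bilinear bound: under $f \ge g$ it is $g$ that furnishes the smaller, binding subgroup size after the final Cauchy--Schwarz.
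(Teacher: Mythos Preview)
Your proposal does not address the stated lemma at all. Lemma~\ref{lem:EnergyShiftSubgr} is a bound on the multiplicative energy $\rE^\times(\cG+\lambda)$ of a shifted multiplicative subgroup; it has nothing to do with the quadrinomial sum $S_\chi(\Psi)$, the substitution $x=yz$, the parameters $f,g,h,\delta$, or the four-case bound with terms like $pg^{-1/8}$. What you have written is a sketch of the proof of Theorem~\ref{thm:Bound3}, not of this lemma. Even as a sketch for Theorem~\ref{thm:Bound3} your outline diverges from the paper's actual argument (which proceeds via Lemma~\ref{lem:Bound T4} on quadrilinear sums rather than a direct Cauchy--Schwarz iteration), but that is a separate matter.

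As for the lemma itself: the paper does not prove it. It is quoted verbatim as \cite[Corollary~4.1]{MaShkShp}, so the ``paper's own proof'' is simply a citation. A genuine proof proposal would have to engage with the definition of $\rE^\times(\cG+\lambda)$ --- counting solutions to $(g_1+\lambda)(g_2+\lambda)=(g_3+\lambda)(g_4+\lambda)$ with $g_i\in\cG$ --- and would typically proceed via multiplicative characters, sum--product type estimates for subgroups, or the Stepanov method, depending on the size regime of $|\cG|$. None of that appears in your write-up.
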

We immediately obtain the following result by observing the dominant term from Lemma  \ref{lem:EnergyShiftSubgr}.
\begin{cor} \label{cor:MultEnergy}
Let $\cG$ be a multiplicative subgroup of $\F_p^*$. 
Then for any   $\lambda \in \F_p^* $, we have 
$$
\rE^\times (\cG + \lambda)\ll  \left\{
\begin{array}{ll}
|\cG^4|/p,& \text{if $ |\cG| \ge p^{1/2}\log p$},\\
|\cG|^2 \log |\cG|, & \text{if $|\cG|< p^{1/2}\log p$}.
\end{array}
\right. $$
\end{cor}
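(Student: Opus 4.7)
The plan is to deduce this corollary directly from Lemma~\ref{lem:EnergyShiftSubgr} by moving the subtracted main term $|\cG|^4/p$ back to the right-hand side, giving an upper bound of the form
$$
\rE^\times(\cG + \lambda) \ll \frac{|\cG|^4}{p} + E(|\cG|, p),
$$
where $E(|\cG|, p)$ is the regime-dependent error of the lemma. The remaining task is a routine comparison: in each of the three regimes of Lemma~\ref{lem:EnergyShiftSubgr}, determine which of the two summands is larger and keep only that one.

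In the first regime $|\cG| \ge p^{2/3}$, the error is $p^{1/2}|\cG|^{3/2}$, and $|\cG|^4/p \ge p^{1/2}|\cG|^{3/2}$ is equivalent to $|\cG| \ge p^{3/5}$, which is implied by $|\cG| \ge p^{2/3}$. In the second regime $p^{2/3} > |\cG| \ge p^{1/2}\log p$, the error is $|\cG|^3 p^{-1/2}$, and $|\cG|^4/p \ge |\cG|^3 p^{-1/2}$ is equivalent to $|\cG| \ge p^{1/2}$, which again holds by hypothesis. Hence across both sub-cases, which together constitute $|\cG| \ge p^{1/2}\log p$, the main term $|\cG|^4/p$ dominates and yields the first branch of the corollary.

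In the remaining regime $|\cG| < p^{1/2}\log p$, the error is $|\cG|^2 \log|\cG|$. Here $|\cG|^4/p$ is dominated by $|\cG|^2 \log|\cG|$ up to an absolute constant throughout the range, so the error term is the one that survives and we obtain the second branch. I expect no real obstacle: the whole deduction is bookkeeping of dominant terms, and the only mildly delicate point is the boundary $|\cG| \asymp p^{1/2}\log p$, where the two summands are comparable but this transition is absorbed harmlessly into the implied constant of~$\ll$.
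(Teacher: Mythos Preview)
Your proposal is correct and mirrors exactly what the paper does: the paper's entire argument is the single remark that the corollary follows ``by observing the dominant term from Lemma~\ref{lem:EnergyShiftSubgr}'', and your case-by-case comparison of $|\cG|^4/p$ against the three error terms is precisely that observation spelled out. Your acknowledgement of the boundary $|\cG|\asymp p^{1/2}\log p$ as the only delicate point is also appropriate and matches the level of precision in the paper.
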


We define $N(\cF,\cG, \cH)$ to be the number of triples of solutions to $f_1(g_1-g_2)=f_2(h_1-h_2)$ where $f_i \in  \cF, g_i \in \cG, h_i \in \cH$ for $i=1,2$. Using Corollary \ref{cor:MultEnergy} we obtain the following result.
\begin{lemma} \label{lem:hectuples}
Let $\cF, \cG, \cH$ be multiplicative subgroups of $\F^*_p$ with cardinalities $F, G, H$ respectively with $G\ge H$. Additionally, let $M=\max(F,G)$. Then 
\begin{align*}
N(\cF,\cG, \cH) \ll  \frac{F^2}{M^{1/2}} \left\{
\begin{array}{ll}
G^2H^2p^{-1/2},& \text{if $ H \ge p^{1/2}\log p$},\\
G^2H^{3/2+o(1)}p^{-1/4}, & \text{if $G\ge p^{1/2}\log p>H$,}\\
(GH)^{3/2+o(1)},& \text{if $ G < p^{1/2}\log p$.}
\end{array}
\right.
\end{align*}
\end{lemma}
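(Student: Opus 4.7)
The plan is to parameterise the equation using the multiplicative subgroup structure of $\cG$ and $\cH$. Writing $g_1=g_2\alpha$ with $\alpha\in\cG$ and $h_1=h_2\beta$ with $\beta\in\cH$, the defining equation
$$
f_1(g_1-g_2)=f_2(h_1-h_2)
$$
becomes $P(\alpha-1)=Q(\beta-1)$, where $P=f_1g_2\in\cF\cdot\cG$ and $Q=f_2h_2\in\cF\cdot\cH$; each $P$ has $\gcd(F,G)$ preimages $(f_1,g_2)$ and each $Q$ has $\gcd(F,H)$ preimages $(f_2,h_2)$. Peeling off the trivial contribution $\alpha=\beta=1$ (which accounts for $F^2GH$ solutions) and noting that for $\alpha,\beta\ne 1$ the ratio $P/Q=(\beta-1)/(\alpha-1)$ must lie in the multiplicative subgroup $\cJ=\cF\cdot\cG\cdot\cH$, I reduce to
$$
N(\cF,\cG,\cH)=F^2GH+\frac{F^2GH}{L}\,V,
$$
where $L=|\cJ|=\lcm(F,G,H)$ and
$$
V=\#\bigl\{(\alpha,\beta)\in(\cG\setminus\{1\})\times(\cH\setminus\{1\})\colon(\beta-1)/(\alpha-1)\in\cJ\bigr\}.
$$
The one piece of bookkeeping needed here is the identity $\gcd(F,G)\gcd(F,H)\cdot|(\cF\cdot\cG)\cap(\cF\cdot\cH)|=F^2GH/L$, which follows from the cyclic structure of $\F_p^*$.

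Next, to bound $V$ I pass to a cross multiplicative energy of the shifted subgroups $\cG-1$ and $\cH-1$. Setting $a=\alpha-1$ and $b=\beta-1$ and rewriting
$$
V=\sum_{j\in\cJ}\bigl|(\cG-1)^*\cap j(\cH-1)^*\bigr|,
$$
the Cauchy--Schwarz inequality gives
$$
V^2\le L\cdot\rE^\times\bigl((\cG-1)^*,(\cH-1)^*\bigr),
$$
and the Plancherel-based estimate $\rE^\times(A,B)\le\sqrt{\rE^\times(A)\rE^\times(B)}$ on $\F_p^*$ then yields
$$
V\le L^{1/2}\bigl(\rE^\times(\cG-1)\bigr)^{1/4}\bigl(\rE^\times(\cH-1)\bigr)^{1/4}.
$$

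To finish I would insert the bounds of Corollary~\ref{cor:MultEnergy} for $\rE^\times(\cG-1)$ and $\rE^\times(\cH-1)$; the three regimes ($H\ge p^{1/2}\log p$; $G\ge p^{1/2}\log p>H$; $G<p^{1/2}\log p$) correspond exactly to the three cases in the lemma statement, with the mixed estimate in each case being the geometric mean of the two individual energy bounds. Using $L=\lcm(F,G,H)\ge\max(F,G)=M$ to replace $L^{-1/2}$ by $M^{-1/2}$ in the prefactor $F^2GH/L$, and absorbing the trivial term $F^2GH$ into the dominant main term in each regime, the three claimed bounds follow. The main obstacle I anticipate is Step~1: correctly tracking the preimage counts through the subgroup parameterisation and verifying the identity $\gcd(F,G)\gcd(F,H)\cdot|(\cF\cdot\cG)\cap(\cF\cdot\cH)|=F^2GH/L$ from the cyclic-group structure of $\F_p^*$; once that identity is in hand, the remaining Cauchy--Schwarz manipulations and the application of Corollary~\ref{cor:MultEnergy} are essentially mechanical.
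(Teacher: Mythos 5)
Your proposal is correct and follows essentially the same route as the paper's proof: both normalise the equation by the subgroup structure so that the count reduces (up to the fibre factor $F^2GH/L$, with $L=|\cF\cdot\cG\cdot\cH|\ge M$) to a sum over the product subgroup, then apply Cauchy--Schwarz to bound that sum by $L^{1/2}\bigl(\rE^\times(\cG-1)\,\rE^\times(\cH-1)\bigr)^{1/4}$ and invoke Corollary~\ref{cor:MultEnergy}. Your version is merely more explicit about the preimage bookkeeping and the diagonal term $F^2GH$, which the paper carries implicitly inside its sum over $\lambda\in S$.
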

\begin{proof}
By multiplying both sides of $f_1(g_1-g_2)=f_2(h_1-h_2)$ by the inverses $f_2^{-1}$ and $h_2^{-1}$ and taking a factor of $g_2$ from the left hand side, and defining $S=\{fgh : f \in \cF, g \in \cG, h \in \cG\}$ we have
\begin{align*}
N(\cF,\cG, \cH)& = \frac{F^2GH}{|S|} \sum_{\lambda \in S} |\{\lambda(g-1)=h-1 : g \in \cG, h \in \cH\}|.
\end{align*}
By the Cauchy inequality,
\begin{align*}
N(\cF,\cG, \cH)^2 &\le \frac{F^4G^2H^2}{|S|} \left| \left\{ \frac{h_1-1}{g_1-1}=\frac{h_2-1}{g_2-1} : h_i \in \cH, g_i \in \cG, i=1,2\right\} \right| \\
&= \frac{F^4G^2H^2}{|S|} (\rE^\times (\cG-1)\rE^\times (\cH-1))^{1/2}.
\end{align*}
By Corollary \ref{cor:MultEnergy},
\begin{align*}
N(\cF,\cG, \cH)^2 \ll  \frac{F^4G^2H^2}{|S|} \left\{
\begin{array}{ll}
G^2H^2/p,& \text{if $ H \ge p^{1/2}\log p$},\\
G^2H^{1+o(1)}p^{-1/2}, & \text{if $G\ge p^{1/2}\log p>H$,}\\
(GH)^{1+o(1)},& \text{if $ G < p^{1/2}\log p$.}
\end{array}
\right.
\end{align*}
Since $|S|\ge M$ we complete our proof.
\end{proof}

Applying Lemma \ref{Bound Dx2} and Lemma \ref{lem:hectuples} in the proof of \cite[Theorem 1.4]{PetShp}, we obtain the following result on quadrilinear sums over subgroups.

\begin{lemma} \label{lem:Bound T4}
For any multiplicative subgroups $\cW, \cX, \cY, \cZ \subseteq \F_p^*$ of cardinalities $W, X, Y, Z$, respectively, with 
$W \ge X \ge Y \ge  Z$
and weights 
$\vartheta=(\vartheta_{w,x,y})$, $\rho= (\rho_{w,x,z})$,  $\sigma = (\sigma_{w,y,z})$ and $\tau=(\tau_{x,y,z})$
with 
$$
\max_{(w,x,y) \in \cW \times \cX \times \cY} |\vartheta_{w,x,y}| \le 1, \quad 
\max_{(w,x,y) \in \cW \times \cX \times \cZ} |\rho_{w,x,z}| \le 1, \quad
$$
$$
\max_{(w,x,y) \in \cW \times \cY \times \cZ} |\sigma_{w,y,z}| \le 1, \quad 
\max_{(w,x,y) \in \cX \times \cY \times \cZ} |\tau_{x,y,z}| \le 1,
$$
for the sums
\begin{align*}
T=\sum_{w \in\cW} \sum_{x \in \cX} \sum_{y \in \cY} \sum_{z\in \cZ} \vartheta_{w,x,y}\rho_{w,x,z} \sigma_{w,y,z} \tau_{x,y,z} \ep(a wxyz)
\end{align*} 
we have
\begin{align*}
|T|&\ll WXZY^{7/8}\\
& \qquad + \left\{
\begin{array}{ll}
W^{31/32}XYZp^{-1/32},& \text{if $ Y\ge p^{1/2}\log p$},\\
W^{31/32}XY^{15/16+o(1)}Z,& \text{if $ X \ge p^{1/2}\log p>Y$}, \\
W^{31/32}(XY)^{15/16+o(1)}Zp^{1/32},& \text{if $ W \ge p^{1/2}\log p>X$}, \\
W^{29/32+o(1)}(XY)^{15/16}Zp^{1/16},& \text{if $ W < p^{1/2}\log p$}.
\end{array}
\right.
\end{align*}
uniformly over $a\in \F_p^*$.
\end{lemma}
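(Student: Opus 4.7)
The strategy is to follow the proof of \cite[Theorem~1.4]{PetShp}---which establishes a quadrilinear-sum bound of this same shape---while substituting our sharper input estimates, Lemma~\ref{Bound Dx2} and Lemma~\ref{lem:hectuples}, in place of the subgroup energy and triple-count bounds used there. The reduction scheme is structurally unchanged; the novelty of Lemma~\ref{lem:Bound T4} lies entirely in the improved constants coming from those two inputs.

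First I would apply the Cauchy--Schwarz inequality to $T$ in two successive stages, doubling suitable pairs of the four subgroup variables, so as to reduce the estimation of $|T|^4$ to a bilinear sum to which Lemma~\ref{lem:bilin} can be applied. The target is a bound of the form
\[
|T|^4 \;\ll\; (\text{prefactor in } W,X,Y,Z)\cdot \sqrt{p\, A\, B},
\]
where the $\ell^2$-masses $A$ and $B$ appearing in Lemma~\ref{lem:bilin} turn out to be, on the one hand, a count of configurations in $\cY$ of the shape $(y_1-y_2)(y_3-y_4)=(y_5-y_6)(y_7-y_8)$---that is, $D_\times(\cY)$---and, on the other, a count of configurations tying $\cW,\cX,\cZ$ together of the shape $f_1(g_1-g_2)=f_2(h_1-h_2)$---that is, an instance of $N(\cF,\cG,\cH)$. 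These are exactly the quantities controlled by our improved input lemmas.

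Next I would substitute the piecewise bounds of Lemma~\ref{Bound Dx2} and Lemma~\ref{lem:hectuples} into this estimate, using the ordering $W\ge X\ge Y\ge Z$. The term $WXZY^{7/8}$ arises from the diagonal contribution of the first Cauchy--Schwarz (where two $y$-variables coincide) combined with the one-eighth power saving harvested from $D_\times(\cY)$ via Lemma~\ref{Bound Dx2}. The four case-dependent terms come from the four branches of Lemma~\ref{lem:hectuples}, indexed by how $W,X,Y$ compare to the threshold $p^{1/2}\log p$, each paired with the branch of Lemma~\ref{Bound Dx2} forced by $|\cY|$. Crucially, because $Y\le X\le W$, the branch chosen for $D_\times(\cY)$ (which depends only on $|\cY|$ relative to $p^{1/2}\log p$) is consistent with the branch chosen in Lemma~\ref{lem:hectuples}, so exactly the four regimes of the statement appear.

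The principal difficulty, and where all the real work lies, is the bookkeeping: in each of the four size regimes one must verify that the exponents of $p$, $W$, $X$, $Y$, $Z$, together with the $o(1)$ losses arising from the logarithmic factors in the ``small'' branches of Lemmas~\ref{Bound Dx2} and~\ref{lem:hectuples}, consolidate into precisely the stated bounds after taking the fourth root. This propagation of the input improvements to the quadrilinear setting is routine rather than deep, but it is lengthy and demands careful tracking of exponents.
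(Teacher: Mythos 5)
Your high-level plan is the same as the paper's: rerun the argument of \cite[Theorem~1.4]{PetShp} with Lemma~\ref{Bound Dx2} and Lemma~\ref{lem:hectuples} substituted as the new inputs. However, the specific details you commit to are wrong in ways that would prevent the stated exponents from appearing. The amplification is by an \emph{eighth} power, not a fourth: the paper imports from \cite[p.~24]{PetShp} the inequality
$$
|T|^8 \ll (WXY)^6Z^7 \sum_{\mu \in \F^*_p} \sum_{\lambda \in \F_p} J(\mu) I(\lambda)\eta_\mu \ep(\lambda\mu) +(WXZ)^8Y^7,
$$
and the final bound comes from taking an eighth root; this is the only way to produce $Y^{7/8}$ in the diagonal term and the exponents with denominator $32$ (e.g.\ $W^{31/4}\to W^{31/32}$). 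A two-stage Cauchy--Schwarz reduction of $|T|^4$ to a single bilinear form cannot yield these exponents.

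You also misidentify the two $\ell^2$-masses fed into Lemma~\ref{lem:bilin}. One is $\sum_\mu J(\mu)^2$, where $J(\mu)$ counts $(x_1,x_2,y_1,y_2)\in\cX^2\times\cY^2$ with $(x_1-x_2)(y_1-y_2)=\mu$; this is a mixed $\cX$--$\cY$ quantity, controlled via Lemma~\ref{Bound Dx2} essentially through $\sqrt{D_\times(\cX)D_\times(\cY)}$, not $D_\times(\cY)$ alone, and its bound splits into \emph{three} cases governed by how both $X$ and $Y$ compare with $p^{1/2}\log p$. The other is $\sum_\lambda I(\lambda)^2=N(\cZ,\cW,\cW)$, a $\cW$--$\cZ$ quantity in which $\cX$ plays no role, and after setting $\cG=\cH=\cW$ its bound from Lemma~\ref{lem:hectuples} collapses to only \emph{two} cases, governed by $W$. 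The four regimes of the statement arise from crossing these three cases with these two (using $W\ge X\ge Y$), not from ``four branches of Lemma~\ref{lem:hectuples}'' --- that lemma has three branches, and as applied here contributes only a two-way split. As written, your bookkeeping plan would not reproduce the case structure or the exponents of the lemma.
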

\begin{proof}
We see from \cite[p. 24]{PetShp} that
\begin{align*}
|T|^8 \ll (WXY)^6Z^7 \sum_{\mu \in \F^*_p} \sum_{\lambda \in \F_p} J(\mu) I(\lambda)\eta_\mu \ep(\lambda\mu) +(WXZ)^8Y^7,
\end{align*}
where $\eta_\mu$, $\mu \in \F^*_p$ is a complex number with $|\eta_\mu|=1$, $J(\mu)$ is the number of quadruples $(x_1,x_2,y_1,y_2) \in \cX^2\times \cY^2$ such that $(x_1-x_2)(y_1-y_2)=\mu \in \F^*_p$ and $I(\lambda)$ is the number of triples $(w_1,w_2,z)\in \cW^2 \times \cZ$ such that $z(w_1-w_2) =\lambda \in \F_p$.
We estimate $J(\mu)$ as in \cite[Equation 3.10]{PetShp} but using our bound from Lemma \ref{Bound Dx2} to obtain
\begin{align} \label{eq:J}
\sum_{\mu \in \F^*_p} J(\mu)^2 \ll \left\{
\begin{array}{ll}
X^4Y^4/p , & \text{if $ Y \ge p^{1/2}\log p$},\\
X^4 Y^{3+ o(1)}p^{-1/2} , & \text{if $ X \ge p^{1/2}\log p>Y$},\\
(XY)^{3+ o(1)}, & \text{if $X< p^{1/2}\log p$}. 
\end{array}
\right.
\end{align}
Now
\begin{align*}
&\sum_{\lambda \in \F_p}I(\lambda)^2 \\
&\quad =   |\{z_1(w_1-w_2)=z_2(w_3-w_4) : w_1,w_2 \in \cW, z_i \in \cZ, i=1,2,3,4 \}|\\
&\quad =N(Z,W,W).
\end{align*}
Therefore, by Lemma \ref{lem:hectuples},
\begin{align} \label{eq:I}
&\sum_{\lambda \in \F_p}I(\lambda)^2 \ll  \left\{
\begin{array}{ll}
Z^{2}W^{7/2}p^{-1/2},& \text{if $ W\ge p^{1/2}\log p$},\\
Z^{2}W^{5/2+o(1)},& \text{if $ W < p^{1/2}\log p$.}
\end{array}
\right.
\end{align}
Applying the classical bound on bilinear exponential sums from Lemma \ref{lem:bilin} together with \eqref{eq:J} and \eqref{eq:I}, we get
\begin{align*}
|T|^8 \ll &(WXZ)^8Y^7 \\
& + \left\{
\begin{array}{ll}
W^{31/4}X^8Y^8Z^8p^{-1/4},& \text{if $ Y\ge p^{1/2}\log p$},\\
W^{31/4}X^8Y^{15/2+o(1)}Z^{8},& \text{if $ X \ge p^{1/2}\log p>Y$}, \\
W^{31/4}X(YZ)^{15/2+o(1)}Z^8p^{1/4},& \text{if $ W \ge p^{1/2}\log p>X$}, \\
W^{29/4+o(1)}(XY)^{15/2}Z^8p^{1/2},& \text{if $ W < p^{1/2}\log p$}.
\end{array}
\right.
\end{align*}
Hence,
\begin{align*} \label{eq:T}
|T|\ll &WXZY^{7/8}\\
& + \left\{
\begin{array}{ll}
W^{31/32}XYZp^{-1/32},& \text{if $ Y\ge p^{1/2}\log p$},\\
W^{31/32}XY^{15/16+o(1)}Z,& \text{if $ X \ge p^{1/2}\log p>Y$}, \\
W^{31/32}(XY)^{15/16+o(1)}Zp^{1/32},& \text{if $ W \ge p^{1/2}\log p>X$}, \\
W^{29/32+o(1)}(XY)^{15/16}Zp^{1/16},& \text{if $ W < p^{1/2}\log p$}.
\end{array}
\right.
\end{align*}
This completes the proof.
\end{proof} 
We compare our bound for subgroups from Lemma \ref{lem:Bound T4} with that for arbitrary sets coming from \cite[Theorem~1.4]{PetShp}
\begin{equation} \label{eq:oldbound 4}
\begin{split}
&\left|\sum_{w \in\cW} \sum_{x \in \cX} \sum_{y \in \cY} \sum_{z\in \cZ} \vartheta_{w,x,y}\rho_{w,x,z} \sigma_{w,y,z} \tau_{x,y,z} \ep(a wxyz)\right| \\ 
& \qquad \qquad \qquad \qquad \ll p^{1/16}W^{15/16}(XY)^{61/64}Z^{31/32}.
\end{split}
\end{equation}
For example, if $W=X=Y=Z=p^{1/2+o(1)}$ then the bounds become $p^{125/64+o(1)}$ and $p^{63/32+o(1)}$ respectively.

\section{Proof of Theorem \ref{thm:Bound3}}
Let $\cG_\alpha, \cG_\beta, \cG_\gamma$ be the subgroups of $\F^*_p$ formed by the elements of orders $\alpha, \beta$ and $\gamma$ respectively.
Then,
\begin{align*}
S_\chi(\Psi) &= \frac{1}{\alpha \beta \gamma} \sum_{x\in \cG_\alpha} \sum_{y\in \cG_\beta}\sum_{z\in \cG_\gamma}\sum_{w \in \F^*_p} \chi(wxyz)\ep (\Psi(wxyz)) \\
&= \frac{1}{\alpha \beta \gamma} \sum_{x\in \cG_\alpha} \sum_{y\in \cG_\beta}\sum_{z\in \cG_\gamma}\sum_{w \in \F^*_p} \\ 
&\qquad \chi(wxyz)\ep (aw^ky^kz^k+bw^\ell x^\ell z^\ell +cw^mx^my^m+dw^nx^ny^nz^n) \\
&=\frac{1}{\alpha \beta \gamma} \sum_{x\in \cG_\alpha} \sum_{y\in \cG_\beta}\sum_{z\in \cG_\gamma}\sum_{w \in \F^*_p} \vartheta_{w,x,y} \rho_{w,x,z} \sigma_{w,y,z}\ep (dw^nx^ny^nz^n)
\end{align*}
where $ \vartheta_{w,x,y}=\chi(wxy)\ep(cw^mx^my^m)$, $\rho_{w,x,z} =\chi(z)\ep (bw^\ell x^\ell z^\ell)$ and  $\sigma_{w,y,z}=\ep (aw^ky^kz^k)$. Now the image $\cW = \{ w^n : w\in \F^*_p\}$ of non-zero $n$th powers contains $(p-1)/\delta$ elements, each appearing with multiplicity $\delta$. Similarly, we can see that the images $\cX=\{x^n : x\in \cG_\alpha \}, \cY=\{y^n : y\in \cG_\beta \}$ and $\cZ=\{z^n : z\in \cG_\gamma \}$ contain $f, g$ and $h$ elements with multiplicity $\gcd(\alpha,\delta),  \gcd(\beta,\delta)$ and $\gcd(\gamma,\delta)$ respectively. We apply Lemma \ref{lem:Bound T4}, recalling our assumption that $f\ge g$ and noticing $f\delta=\lcm (\alpha,\delta)<p-1$, hence $f\le p/\delta$, which gives us
\begin{align*}
S_\chi(\Psi) \ll &\frac{\delta \gcd(\alpha,\delta)\gcd(\beta,\delta)\gcd(\gamma,\delta)}{\alpha \beta \gamma} (p/\delta)fg^{7/8}h\\
&+\frac{\delta \gcd(\alpha,\delta)\gcd(\beta,\delta)\gcd(\gamma,\delta)}{\alpha \beta \gamma} \\
& \times \left\{
\begin{array}{ll}
(p/\delta)^{31/32}fghp^{-1/32},& \text{if $ g\ge p^{1/2}\log p$},\\
(p/\delta)^{31/32}fg^{15/16+o(1)}h,& \text{if $ f \ge p^{1/2}\log p>g$}, \\
(p/\delta)^{31/32}(fg)^{15/16+o(1)}hp^{1/32},& \text{if $ p/\delta \ge p^{1/2}\log p>f$}, \\
(p/\delta)^{29/32+o(1)}(fg)^{15/16}hp^{1/16},& \text{if $ p/\delta < p^{1/2}\log p$}.
\end{array}
\right. \\
=& pg^{-1/8} \\
&+\left\{
\begin{array}{ll}
p^{15/16}\delta^{1/32},& \text{if $ g\ge p^{1/2}\log p$},\\
p^{31/32}\delta^{1/32}g^{-1/16+o(1)},& \text{if $ f \ge p^{1/2}\log p>g$}, \\
p\delta^{1/32}(fg)^{-1/16+o(1)},& \text{if $p/\delta  \ge p^{1/2}\log p>f$}, \\
p^{31/32+o(1)}\delta^{3/32}(fg)^{-1/16},& \text{if $ p/\delta < p^{1/2}\log p$}.
\end{array}
\right.
\end{align*}
This concludes the proof.

\end{document}